\def\th@exercise{%
  \normalfont 
  \thm@headpunct{:}%
}
\theoremstyle{plain}
\newtheorem{thm}{Theorem}
\newtheorem{lemma}{Lemma}
\pgfplotsset{compat=1.15}
\theoremstyle{remark}
\theoremstyle{plain}
\newtheoremstyle{note}
  {3pt}
  {3pt}
  {}
  {}
  {\itshape}
  {:}
  {.5em}
  {}
\newtheoremstyle{citing}
  {3pt}
  {3pt}
  {\itshape}
  {}
  {\bfseries}
  {.}
  {.5em}
  {\thmnote{#3}}
\theoremstyle{citing}
\newtheoremstyle{break}
  {9pt}
  {9pt}
  {\itshape}
  {}
  {\bfseries}
  {.}
  {\newline}
  {}
\let\lvert=|\let\rvert=|
\title{On the Symmetric Normaliser Graph of a Group}
\author{Surbhi \footnote{Corresponding author, Dr. B. R. Ambedkar University Delhi, Delhi 110006; \ E-mails: surbhi.21@stu.aud.ac.in, surbhi.ts19@gmail.com} \ and \ Geetha Venkataraman\footnote{Dr. B. R. Ambedkar University Delhi, Delhi 110006; E-mails: geetha@aud.ac.in, geevenkat@gmail.com}}
\date{}
\begin{document}
\fontfamily{cmr}\selectfont
\maketitle

\bigskip
\noindent
{\small{\bf ABSTRACT:}}
 In this paper we introduce the symmetric normaliser graph of a group $G$. The vertex set of this graph consists of elements of the group. Vertices $x$ and $y$ are adjacent if $x$ lies in the normaliser of $\langle y \rangle$ and $y$ lies in the normaliser of $\langle x \rangle$. We investigate the hierarchical position this graph occupies in the hierarchy of graphs defined on groups. We show that the existing hierarchy is further refined by this graph and that the edges of this graph lie between the edges of the commuting graph and the nilpotent graph. For finite groups, we prove a necessary and sufficient condition for the symmetric normaliser graph to be equal to the commuting graph and similarly, for equality with the nilpotent graph. The edge set of the symmetric normaliser graph is also a subset of the edge set of the Engel graph of a group and has connections to the non-generating graph of a group.

\medskip
\noindent
{\small{\bf Keywords}{:} }
Groups, symmetric normaliser graph, commuting graph, nilpotent graph, Engel graph, non-generating graph, graph hierarchy, exponent-critical groups 

\medskip
\noindent
{\small{\bf Mathematics Subject Classification-MSC2020}{:} }
05C25, 20D60, 20E34, 20F18

\baselineskip=\normalbaselineskip


\section{Introduction}
The study of graphs defined on groups has emerged as an important area of research. There is a rich literature exploring graphs defined on groups. Some of these are \cite {AA2007},\cite {AACNS2017}, \cite{PJC2022}, \cite{ADM2023} and \cite{DGLM2025}. In particular, in the survey paper \cite{PJC2022}, P. J. Cameron introduced the concept of a hierarchy between graphs defined on groups and posed several related open questions. Some of these open questions have been answered in \cite{SV2024_1} and \cite{SV2024_2}.

We define a new graph called the \textit{symmetric normaliser graph} of a finite group $G$. It is a graph with vertex set $G$ and two vertices $x$ and $y$ are adjacent if $x$ lies in $N(\langle y \rangle)$ and $y$ lies in $N(\langle x \rangle)$, where $N(H)$ denotes the normaliser of the subgroup $H$ of $G$. We denote this graph as SNorm($G$). The paper \cite{DGM2025} independently investigates a directed graph, which has some similarity, called directed normalising graph. 
\par
 We will denote the power graph, the enhanced power graph and the commuting graph of a group $G$ by Pow($G$), EPow($G$) and Com($G$) respectively. In \cite{PJC2022}, P. J. Cameron described the sequence: power graph, enhanced power graph and commuting graph as a hierarchy, as the edge set of each is contained in the edge set of the next in the sequence. 

In addition to this, we consider the nilpotent graph denoted as Nilp($G$) (in which two vertices are adjacent if they generate a nilpotent subgroup).  The hierarchy we can consider now is the sequence: power graph, enhanced power graph, commuting graph and nilpotent graph. We refine this for a group $G$ with the introduction of the symmetric normaliser graph. Before we state our result refining the above hierarchies we mention the Engel graph of a group denoted as Engel($G$) and the non-generating graph of a group $G$, denoted by NGen($G$). 

The Engel graph was first introduced in \cite{AA2007} and was modified in \cite{PJC2022}. We follow the second definition. Engel($G$) has the elements of $G$ as its vertex set and elements $x$ and $y$ are adjacent if for some integer $k$, either $[x ,_k y] = 1$ or $[y ,_k x] = 1$ where $[x ,_k y] = [[x ,_{k-1} y],y]$. The graph NGen($G$) has $G$ as the vertex set and two elements are adjacent if they do not generate $G$. For a graph $\Gamma$ let $E(\Gamma)$ denote its edge set.

\begin{thm}\label{thm_1}
    Let $G$ be a group. Then
    \begin{enumerate}[{\rm (i)}]
        \item $E$\rm{(Com(}$G$\rm{))} $\subseteq$ $E$\rm{(SNorm(}$G$\rm{))} $\subseteq$ $E$\rm{(Nilp(}$G$\rm{))}.
        \item  $E$\rm{(SNorm(}$G$\rm{))} $\subseteq$ $E$\rm{(NGen(}$G$\rm{))} if $G$ is a non-abelian simple group or not 2-generated. 
        \item $E$\rm{(SNorm(}$G$\rm{))} $\subseteq$ $E$\rm{(Engel(}$G$\rm{))}.
    \end{enumerate}
\end{thm}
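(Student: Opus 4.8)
The plan is to reduce all three parts to a single structural observation. Suppose $x\in N(\langle y\rangle)$ and $y\in N(\langle x\rangle)$ with $x\neq y$, and put $H=\langle x,y\rangle$. Then $\langle x\rangle$ is normalised by $x$ (trivially) and by $y$ (by hypothesis), so $\langle x\rangle\trianglelefteq H$; symmetrically $\langle y\rangle\trianglelefteq H$. Hence $H=\langle x\rangle\langle y\rangle$ is a product of two normal cyclic subgroups. Writing $Z=\langle x\rangle\cap\langle y\rangle$, the standard fact that $[A,B]\le A\cap B$ for normal $A,B$ gives $[\langle x\rangle,\langle y\rangle]\le Z$; since $\langle x\rangle$ and $\langle y\rangle$ are abelian and centralise each other modulo $[\langle x\rangle,\langle y\rangle]$, the quotient $H/[\langle x\rangle,\langle y\rangle]$ is abelian, so $H'\le[\langle x\rangle,\langle y\rangle]\le Z$. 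Finally $Z$ is centralised by both $\langle x\rangle$ and $\langle y\rangle$ (each being abelian and containing $Z$), hence $Z\le Z(H)$. Therefore $H'\le Z(H)$, i.e. $H=\langle x,y\rangle$ is nilpotent of class at most $2$. I would isolate this as a lemma, since all three parts follow from it.

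For part (i): if $x,y$ commute and $x\neq y$, then $x$ centralises, hence normalises, $\langle y\rangle$, and symmetrically, so $\{x,y\}\in E(\mathrm{SNorm}(G))$, giving the first inclusion; and if $\{x,y\}\in E(\mathrm{SNorm}(G))$ the lemma shows $\langle x,y\rangle$ is nilpotent, so $\{x,y\}\in E(\mathrm{Nilp}(G))$, giving the second.

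For part (ii): if $G$ is not $2$-generated, then $G$ is non-cyclic and $\langle x,y\rangle\neq G$ for every pair of distinct elements, so $\mathrm{NGen}(G)$ is the complete graph on $G$ and the inclusion is immediate. If $G$ is non-abelian simple, then $G$ is not nilpotent (a nontrivial nilpotent group has nontrivial centre, which for a simple group would force it to be abelian of prime order); by the lemma any edge $\{x,y\}$ of $\mathrm{SNorm}(G)$ has $\langle x,y\rangle$ nilpotent, hence $\langle x,y\rangle\neq G$, so $\{x,y\}\in E(\mathrm{NGen}(G))$. For part (iii): by the lemma $H=\langle x,y\rangle$ has class at most $2$, so $[x,y]\in H'\le Z(H)$ and therefore $[x,_2 y]=[[x,y],y]=1$; taking $k=2$ shows $\{x,y\}\in E(\mathrm{Engel}(G))$.

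I do not anticipate a serious obstacle: the argument is short once the lemma is in place. The points requiring care are (a) verifying that $\langle x\rangle$ and $\langle y\rangle$ are normal in $H$ itself, not merely in $G$ — this is exactly where the \emph{symmetry} of the adjacency relation is used, and it is what fails for a one-sided normaliser condition — and (b) the bookkeeping $H'\le Z\le Z(H)$. Both go through for arbitrary, not necessarily finite, groups, so parts (i) and (iii) hold in full generality, while part (ii) needs only the elementary classification of simple nilpotent groups. Strictness of the inclusions in the refined hierarchy is a separate issue, to be settled by explicit examples rather than by this argument.
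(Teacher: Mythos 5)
Your proof is correct and follows essentially the same route as the paper: both rest on the observation that an edge $\{x,y\}$ of $\mathrm{SNorm}(G)$ forces $[x,y]\in\langle x\rangle\cap\langle y\rangle\le Z(\langle x,y\rangle)$, so $\langle x,y\rangle$ is nilpotent of class at most $2$, from which (i) and (iii) are immediate. The only (harmless) divergence is in part (ii) for the simple case, where you argue that a non-abelian simple group cannot be nilpotent, whereas the paper notes directly that $\langle x,y\rangle\le N(\langle y\rangle)\lneq G$.
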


The natural question that arises is when do we have equality amongst the edge sets of graphs in the above hierarchy. For a finite group $G$ to have equal power and enhanced power graph, or equal enhanced power graph and commuting graph, the necessary and sufficient conditions can be found in \cite{PJC2022}. These were extended to groups in general in \cite{SV2024_2}. We have the following results in a similar vein. We need to discuss exponent-critical groups \cite{BCMV2025} and the definition of an SNNC-group (symmetric normaliser non-commuting group) to state our second result.

A finite group $G$ is exponent-critical if the exponent of $G$ is not the least common multiple of the exponents of its proper non-abelian subgroups. An exponent-critical $p$-group $P$ is of type $\mathcal{B}$ if $P$ has more than one abelian maximal subgroup. An SNNC-group is an exponent-critical $p$-group of type $\mathcal{B}$, which is isomorphic to the quaternion group $Q_8$ or to a group $P$ of order $p^n$ given below.
$$P =\langle a,b \mid b^{p^{\beta}} = [a,b]^p = [a,b,a] = [a,b,b] = 1, a^{p^{\alpha}} = [a,b]  \rangle$$ where $\alpha, \beta$ are integers such that: $\alpha + \beta = n-1$. Further $\alpha \geq \beta \geq 1$ except when $p=2$ and $\alpha = \beta$ we have $\alpha > 1$.

\begin{thm}\label{thm_2}
    The commuting graph of a finite group $G$ is equal to the symmetric normaliser graph of $G$ if and only if $G$ does not have a subgroup isomorphic to an SNNC-group.
\end{thm}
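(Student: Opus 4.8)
By Theorem~\ref{thm_1}(i), $E(\mathrm{Com}(G))\subseteq E(\mathrm{SNorm}(G))$ for every group $G$, and since these two graphs share the vertex set $G$, they are equal precisely when $E(\mathrm{SNorm}(G))\subseteq E(\mathrm{Com}(G))$. Call a pair $\{x,y\}$ of elements of $G$ \emph{violating} if $x\in N(\langle y\rangle)$ and $y\in N(\langle x\rangle)$ but $xy\ne yx$; thus $\mathrm{Com}(G)\ne\mathrm{SNorm}(G)$ exactly when $G$ contains a violating pair. For the ``only if'' direction it is enough to note that a violating pair of a subgroup is a violating pair of $G$, together with the fact that every SNNC-group has a violating pair: in $Q_8$ take $\{i,j\}$, and in the presented group $P$ take $\{a,ab\}$ --- the defining relations make $c=[a,b]$ central of order $p$, both $\langle a\rangle$ and $\langle ab\rangle$ are normal in $P$ and contain $c$, while $[a,ab]=c\ne1$. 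Hence any group with an SNNC-subgroup satisfies $\mathrm{Com}(G)\ne\mathrm{SNorm}(G)$.

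The content is the converse: a finite group with a violating pair has an SNNC-subgroup. The plan is to choose a counterexample $G$ of least order --- so $G$ has a violating pair $\{x,y\}$ but no SNNC-subgroup --- and to deduce that $G$ is itself an SNNC-group, contradicting the choice of $G$. First, minimality gives $G=\langle x,y\rangle$, since $\langle x,y\rangle$ inherits both the violating pair and the absence of SNNC-subgroups. Next we reduce to prime-power generators: if $x$ does not have prime-power order, write $x=x_1x_2$ with $x_1,x_2\in\langle x\rangle$ of coprime orders greater than $1$; one of $\{x_1,y\},\{x_2,y\}$ is again violating (otherwise $x$ commutes with $y$), and it either generates a proper subgroup --- impossible by minimality --- or replaces $\{x,y\}$ by a violating pair whose first entry has strictly smaller order. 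Iterating on $x$ and then on $y$, we may take $x$ of order $p^{a_1}$ and $y$ of order $q^{a_2}$. Since $\langle x\rangle$ acts on the cyclic group $\langle y\rangle$ through a $p$-subgroup of $\mathrm{Aut}(C_{q^{a_2}})$, if $p\ne q$ then $p\mid q-1$, and symmetrically $q\mid p-1$, which is absurd; so $p=q$. Now every element of $\langle x\rangle$ normalises $\langle y\rangle$ and conversely, so $\langle x\rangle$ and $\langle y\rangle$ are both normal in $G=\langle x\rangle\langle y\rangle$, and $G$ is a $p$-group.

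Within this $p$-group, $[x,y]\in\langle x\rangle\cap\langle y\rangle=:D\subseteq C_G(x)\cap C_G(y)=Z(G)$, so $G$ has nilpotency class $2$. Applying minimality to $\{x^p,y\}$ and $\{x,y^p\}$ shows each is non-violating --- a violating one would generate a proper subgroup, since, e.g., $G=\langle x^p,y\rangle$ would force $x\in\langle x^p\rangle\langle y\rangle$, hence $x\in\langle y\rangle$, contradicting $[x,y]\ne1$ --- so $x^p,y^p\in Z(G)$. Therefore $G/Z(G)$ is elementary abelian of rank $2$: $|G:Z(G)|=p^2$, $|G'|=p$, and $\Phi(G)=G^{p}G'\subseteq Z(G)$ with $|G:\Phi(G)|=p^2=|G:Z(G)|$, so $\Phi(G)=Z(G)$. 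Consequently every maximal subgroup of $G$ contains $Z(G)$ and is abelian, so $G$ is minimal non-abelian (in particular exponent-critical of type $\mathcal{B}$); and $G/\langle x\rangle\cong\langle y\rangle/D$ is cyclic, so $G$ is metacyclic. By R\'edei's classification the metacyclic minimal non-abelian $p$-groups are $Q_8$ and the split groups $M_p(m,n)=\langle a,b\mid a^{p^m}=b^{p^n}=1,\ a^b=a^{1+p^{m-1}}\rangle$ $(m\ge2)$; rewriting with $c=a^{p^{m-1}}=[a,b]$ identifies $M_p(m,n)$ with the presented group $P$ for $\alpha=m-1$, $\beta=n$. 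One then checks that $M_p(m,n)$ carries a violating pair precisely for $n\le m-1$ with the single exception $M_2(2,1)=D_8$: when $n\le m-1$ one verifies $c\in\langle ab\rangle$ (so $\langle ab\rangle$ is normal, conjugation by $a$ or $b$ sending $ab$ to $(ab)c^{\pm1}$, and $\{a,ab\}$ violates commutativity), $D_8$ fails because $ab$ then has order $2$, and when $n\ge m$ every normal cyclic subgroup of $M_p(m,n)$ containing $c$ lies in the abelian subgroup $C_G(a)$. These parameters are exactly those for which $P$ qualifies as an SNNC-group, so $G$ --- being $Q_8$ or one of these $M_p(m,n)$ --- is an SNNC-group, a contradiction; the theorem follows.

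The main obstacle is this final classification step: pinning down exactly which metacyclic minimal non-abelian $p$-groups admit a violating pair and matching them, without slack, to the SNNC list --- especially the $p=2$ bookkeeping whereby $D_8$ is excluded (equivalently $\alpha>1$ is demanded when $\alpha=\beta$) while $Q_8$ is included, and the verification that when no violating pair of the shape $\{a,ab\}$ exists there is genuinely none at all. Everything preceding it --- reducing to a two-generator $p$-group with $\langle x\rangle,\langle y\rangle$ normal, and then to a minimal non-abelian metacyclic group with $|G:Z(G)|=p^2$ --- is a routine minimal-counterexample argument, powered by the three observations: splitting off prime-power parts, the $\mathrm{Aut}(C_{q^{a}})$ obstruction ruling out $p\ne q$, and $x^p,y^p\in Z(G)$.
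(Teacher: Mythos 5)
Your proposal is correct and reaches the same destination, but by a genuinely different route from the paper's, so a comparison is worthwhile. The paper (Lemma \ref{com_eq_snorm_lemma}) extracts the critical subgroup by minimising the \emph{order of an element} occurring in a violating pair, and uses the commutator identities (2.1a)--(2.1b) to get $[h,k]^p=[h^p,k]=1$ and hence $p=q$; it then feeds the resulting $2$-generated $p$-group with derived subgroup of order $p$ into the classification of exponent-critical $p$-groups of type $\mathcal{B}$ (Theorem D and Theorem 4.2 of \cite{BCMV2025}, reproduced as Lemma \ref{ECG_thm}), and must then dispose of \emph{every} parameter family, including the non-metacyclic ones with $\rho=\sigma=1$ (Lemma \ref{ecg_case_1b_1c_2b}) before isolating the SNNC cases (Lemma \ref{ECG_snorm}). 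You instead run a minimal counterexample on $|G|$, which buys you more structure up front: the violating pair generates $G$, both $\langle x\rangle$ and $\langle y\rangle$ are normal, so $G=\langle x\rangle\langle y\rangle$ is \emph{metacyclic}, and the computations $x^p,y^p\in Z(G)$, $\Phi(G)=Z(G)$ make $G$ minimal non-abelian. This lets you appeal to R\'edei's classification of metacyclic minimal non-abelian $p$-groups and skip the non-metacyclic type-$\mathcal{B}$ families entirely --- a real economy. Your $p=q$ argument via $p$-subgroups of $\mathrm{Aut}(C_{q^{a}})$ is heavier than the paper's one-line commutator computation but equally valid (you correctly need both induced actions to be nontrivial, which non-commutation supplies). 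The one place where your write-up and the paper carry the same burden is the final case analysis: deciding exactly which $M_p(m,n)$ admit a violating generating pair. Your stated answer ($n\le m-1$, excluding $D_8$, including $Q_8$) translates under $\alpha=m-1$, $\beta=n$ precisely to the SNNC parameter list, and your sketch of the verification ($c\in\langle ab\rangle$ iff $n\le m-1$; for $n\ge m$ every normal cyclic subgroup containing $c$ sits inside the abelian maximal subgroup $C_G(a)$) is the right argument and matches what the paper proves in Lemmas \ref{ecg_case_1b_1c_2b} and \ref{ECG_snorm}; it is left as ``one then checks,'' but at the same level of detail the paper itself uses for its case A1(c). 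No gap, just a different decomposition of the work.
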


Lastly, we have the following necessary and sufficient condition.

\begin{thm}\label{thm_3}
    The symmetric normaliser graph and the nilpotent graph of a finite group $G$ are equal if and only if for any odd prime $p$, the Sylow $p$-subgroups are abelian and for $p=2$ the Sylow $p$-subgroups are either abelian or a direct product of $Q_8$, the quaternion group and an elementary abelian group.
\end{thm}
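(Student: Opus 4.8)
The plan is to pivot off Theorem~\ref{thm_1}(i), which already supplies $E(\mathrm{SNorm}(G)) \subseteq E(\mathrm{Nilp}(G))$; hence the two graphs coincide precisely when the reverse inclusion $E(\mathrm{Nilp}(G)) \subseteq E(\mathrm{SNorm}(G))$ holds, i.e.\ when every pair $x,y$ with $\langle x,y\rangle$ nilpotent satisfies $x \in N(\langle y\rangle)$ and $y \in N(\langle x\rangle)$. The first step is to localise this condition at the primes. Writing a nilpotent subgroup $\langle x,y\rangle = P_1 \times \cdots \times P_k$ as the internal direct product of its Sylow subgroups and letting $x_i,y_i$ be the $P_i$-components of $x,y$, we have $P_i = \langle x_i,y_i\rangle$ and $\langle y\rangle = \langle y_1\rangle \times \cdots \times \langle y_k\rangle$. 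Since elements of distinct $P_j$ commute, conjugation by $x$ restricted to $P_i$ equals conjugation by $x_i$, so $x\langle y\rangle x^{-1} = \prod_i x_i\langle y_i\rangle x_i^{-1}$; comparing $P_i$-components shows that $x$ normalises $\langle y\rangle$ if and only if $x_i$ normalises $\langle y_i\rangle$ for every $i$, and symmetrically with $x$ and $y$ interchanged.

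Consequently $E(\mathrm{Nilp}(G)) \subseteq E(\mathrm{SNorm}(G))$ holds if and only if, for every prime $p$ and every $2$-generated $p$-subgroup $\langle u,v\rangle$ of $G$, one has $u \in N(\langle v\rangle)$ and $v \in N(\langle u\rangle)$. Letting $u$ and $v$ range over a Sylow $p$-subgroup $S$ of $G$, this says exactly that every cyclic subgroup of $S$ is normal in $S$; conversely that property of the Sylow subgroups is sufficient, since every $2$-generated $p$-subgroup lies in a conjugate of $S$ and a conjugate of a Dedekind group is Dedekind. Thus the whole question reduces to classifying the finite $p$-groups in which every cyclic subgroup is normal.

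The last step is to note that any subgroup $H$ of a group is the product of the cyclic subgroups $\langle h\rangle$ over $h \in H$, and that the subgroup generated by a family of normal subgroups is again normal; hence in a $p$-group in which all cyclic subgroups are normal, in fact \emph{all} subgroups are normal, so the group is Dedekind (the converse being trivial). By the classical classification of finite Dedekind groups, such a group is either abelian or, only when $p=2$, isomorphic to $Q_8 \times E$ with $E$ elementary abelian (necessarily a $2$-group, since it sits inside a $2$-group); for odd $p$ only the abelian case can occur, as every non-abelian Dedekind group has $Q_8$ as a direct factor. Combining these equivalences over all primes gives precisely the stated criterion. The one step requiring genuine care is the localisation in the first paragraph — tracking the components through the direct-product decomposition and verifying that mutual normalisation is detected one Sylow factor at a time; past that, the result is just the classical description of groups all of whose cyclic subgroups are normal.
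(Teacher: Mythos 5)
Your proposal is correct and follows essentially the same route as the paper: both reduce the equality to the condition that the Sylow subgroups of $G$ are Dedekind and then invoke the classification of finite Dedekind groups. The only cosmetic difference is that you verify sufficiency by localising the mutual-normalisation condition to the Sylow components of $\langle x,y\rangle$ by hand, whereas the paper simply observes that a nilpotent group whose Sylow subgroups are Dedekind is itself Dedekind and hence has a complete symmetric normaliser graph.
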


In Section 2 we prove Theorem \ref{thm_1} and Theorem \ref{thm_2}. The last section has the proof of Theorem \ref{thm_3}, some concluding remarks and open questions.  

\section{Refined Hierarchies and Equalities I}

\setcounter{thm}{0}
\par
Proposition 2.6 of \cite{PJC2022} states that if a finite group $G$ is non-abelian or not 2-generated, then $E$(Com($G$)) $\subseteq$ $E$(NGen($G$)). We derive a similar result for the SNorm($G$) and NGen($G$).
\begin{thm}\label{Hierarchy} 
    Let $G$ be a group. Then
    \begin{enumerate}[{\rm (i)}]
        \item $E$\rm{(Com(}$G$\rm{))} $\subseteq$ $E$\rm{(SNorm(}$G$\rm{))} $\subseteq$ $E$\rm{(Nilp(}$G$\rm{))}.
        \item  $E$\rm{(SNorm(}$G$\rm{))} $\subseteq$ $E$\rm{(NGen(}$G$\rm{))} if $G$ is a non-abelian simple group or not 2-generated. 
        \item $E$\rm{(SNorm(}$G$\rm{))} $\subseteq$ $E$\rm{(Engel(}$G$\rm{))}.
    \end{enumerate}
\end{thm}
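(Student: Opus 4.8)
The plan is to route all three inclusions through a single structural fact: if $\{x,y\}$ is an edge of $\mathrm{SNorm}(G)$ and $H=\langle x,y\rangle$, then $\langle x\rangle$ and $\langle y\rangle$ are both normal in $H$, and hence $H$ is nilpotent of class at most $2$. Everything in the theorem is then a short consequence of this.

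For (i), the first inclusion is immediate: if $xy=yx$ then $y$ centralises, so normalises, $\langle x\rangle$ and likewise $x$ normalises $\langle y\rangle$, whence $\{x,y\}\in E(\mathrm{SNorm}(G))$. For the second, assume $\{x,y\}\in E(\mathrm{SNorm}(G))$. Since $x\in N(\langle y\rangle)$ and trivially $y\in N(\langle y\rangle)$, the whole group $H$ normalises $\langle y\rangle$, so $\langle y\rangle\trianglelefteq H$; symmetrically $\langle x\rangle\trianglelefteq H$. Writing $A=\langle x\rangle$ and $B=\langle y\rangle$, normality gives $[A,B]\subseteq A\cap B$ (it lies in $A$ because $B$ normalises $A$, and in $B$ because $A$ normalises $B$), so the images of $x$ and $y$ commute in $H/(A\cap B)$ and therefore $H'\subseteq A\cap B$. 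But $A\cap B$ is contained in the abelian group $A$ and also in $B$, so it is centralised by $x$ and by $y$, i.e. $A\cap B\subseteq Z(H)$. Thus $H'\subseteq Z(H)$, $H$ is nilpotent of class at most $2$, and $\{x,y\}\in E(\mathrm{Nilp}(G))$.

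Part (iii) then follows with no extra work: in a group of nilpotency class at most $2$ every commutator is central, so $[x,_2 y]=1$ in $H=\langle x,y\rangle$, which is precisely the Engel condition, giving $\{x,y\}\in E(\mathrm{Engel}(G))$. For (ii), if $G$ is not $2$-generated then no pair of elements generates $G$, so every edge of $\mathrm{SNorm}(G)$ is trivially an edge of $\mathrm{NGen}(G)$. If instead $G$ is a non-abelian simple group and $\{x,y\}\in E(\mathrm{SNorm}(G))$, then $\langle x,y\rangle$ is nilpotent by the structural fact; were it equal to $G$, then $G$ would be a nilpotent simple group, hence cyclic of prime order and abelian, a contradiction. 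So $\langle x,y\rangle\ne G$ and $\{x,y\}\in E(\mathrm{NGen}(G))$.

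The only step requiring any care is the nilpotency computation — confirming that two normal cyclic (indeed, normal abelian would do) subgroups generating $H$ force $H'\le A\cap B\le Z(H)$ — and there I would spell out the identities $[x,y]=x^{-1}x^{y}\in A$ and $[x,y]=(y^{-1}y^{x})^{-1}\in B$ explicitly before invoking abelianness of $A\cap B$. I do not anticipate a genuine obstacle, only bookkeeping; the degenerate cases where $x$ or $y$ is the identity are absorbed automatically, since then $\langle x,y\rangle$ is cyclic, in particular nilpotent and not a non-abelian simple group.
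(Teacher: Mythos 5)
Your proposal is correct and follows essentially the same route as the paper: both arguments rest on showing that an edge of $\mathrm{SNorm}(G)$ forces $[x,y]$ to be central in $H=\langle x,y\rangle$ (you justify this via $[x,y]\in\langle x\rangle\cap\langle y\rangle$, which the paper leaves implicit), whence $H$ has class at most $2$ and parts (i) and (iii) follow. The only divergence is in (ii) for non-abelian simple $G$, where the paper observes directly that $\langle x,y\rangle\leq N(\langle y\rangle)\lneq G$ while you instead invoke nilpotency of $\langle x,y\rangle$; both are valid one-line arguments, and you additionally cover the trivial not-$2$-generated case that the paper's proof omits.
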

\begin{proof}
It is clear that if an edge is present in the commuting graph then it is present in the symmetric normaliser graph. Let $x$ and $y$ be adjacent in SNorm($G$) and let $H$ denote the subgroup of $G$ generated by $x$ and $y$. Since the commutator $c = [x,y] = xyx^{-1}y^{-1}$ commutes with both $x$ and $y$, the subgroup generated by $c$ is in the center of $H$. Clearly, the quotient group $\frac{H}{\langle c \rangle}$ is abelian and therefore nilpotent. Consequently $H$ is nilpotent.

Now let $G$ be a non-abelian simple group and assume that $x,y$ are non-identity elements.  Then we have $H \leq N(\langle y \rangle) \lneq G$, which implies that $x,y$ are adjacent in NGen($G$). Lastly, since the commutator $[x, y]$ commutes with both $x$ and $y$, we have $[x,_2 y ] = 1$ and $[y ,_2 x] =1$. Therefore, elements $x,y$ are adjacent in the Engel graph of $G$.
\end{proof}

The rest of this section is devoted to proving results which will give us the necessary and sufficient condition under which the commuting graph of a finite group equals that of the symmetric normaliser graph. The commutator identities below will also be used.
\begin{subequations}
\begin{equation}
\label{comu_id_1}
[x,yz] =[x,y][x,z]^y
\end{equation}
\begin{equation}
\label{comu_id_2}
 [xz,y] = [z,y]^x[x,y]
 \end{equation}
\end{subequations}

\begin{lemma}\label{com_eq_snorm_lemma}
    Let $G$ be a finite group such that the commuting graph of $G$ is not equal to the symmetric normaliser graph of $G$. Then, the group $G$ has a subgroup $H = \langle h, k \rangle$ which is a 2-generated $p$-group such that $|H'| = p$ and $H' \leq Z(H)$. Further, $h, k$ are adjacent in SNorm($H$).
\end{lemma}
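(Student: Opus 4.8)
The plan is to produce one pair of vertices witnessing the strict inclusion and then cut the subgroup they generate down to a $p$-group by two reductions, keeping adjacency in the symmetric normaliser graph at each stage. Since $E(\mathrm{Com}(G))\subseteq E(\mathrm{SNorm}(G))$ by Theorem~\ref{Hierarchy}(i), the hypothesis that these graphs differ yields vertices $x,y$ that are adjacent in $\mathrm{SNorm}(G)$ but not in $\mathrm{Com}(G)$; that is, $x\in N(\langle y\rangle)$, $y\in N(\langle x\rangle)$ and $c:=[x,y]\neq 1$. Put $K=\langle x,y\rangle$. The computation in the proof of Theorem~\ref{Hierarchy}(i) already shows that $c$ is central in $K$ and that $K'=\langle c\rangle$, so $K$ is nilpotent of class $2$ with cyclic central derived subgroup.

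The first reduction makes $|c|$ a prime. Pick a prime $p$ dividing $n:=|c|$ and replace $x$ by $x^{n/p}$. Then $x^{n/p}\in N(\langle y\rangle)$ automatically, as normalisers are subgroups; and $y\in N(\langle x\rangle)$ still normalises $\langle x^{n/p}\rangle$, because conjugation by $y$ restricts to a power automorphism of $\langle x\rangle$. Since $K$ has class $2$ (using identity~\eqref{comu_id_2}), $[x^{n/p},y]=[x,y]^{n/p}=c^{n/p}$, which has order exactly $p$. After renaming we may thus assume $|c|=p$.

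The second reduction passes to a Sylow subgroup. Write the finite nilpotent group $K$ as $P_1\times\cdots\times P_r$, the direct product of its Sylow subgroups, with $p=p_j$, and decompose $x=\prod_i x_i$, $y=\prod_i y_i$ blockwise. Then $c=[x,y]=\prod_i[x_i,y_i]$ lies in $K'=\prod_i P_i'$ and has order $p$, which forces $[x_i,y_i]=1$ for $i\neq j$ and $c=[x_j,y_j]$. Set $h=x_j$, $k=y_j$ and $H=\langle h,k\rangle\leq P_j$; this is a $2$-generated $p$-group. Projecting the relations $cx=xc$, $cy=yc$ onto the factor $P_j$ shows $c$ commutes with $h$ and $k$, so $c\in Z(H)$; then $H/\langle c\rangle$ is generated by commuting images of $h,k$, giving $H'\leq\langle c\rangle$, and since $c=[h,k]\in H'$ we get $H'=\langle c\rangle$, of order $p$ and central in $H$. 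For adjacency of $h,k$ in $\mathrm{SNorm}(H)$, observe that the complementary parts $x'=\prod_{i\neq j}x_i$ and $y'=\prod_{i\neq j}y_i$ commute with each other (all their blockwise commutators vanish) and with everything in $P_j$, so a short conjugation calculation gives $y^{x}=(y_j)^{h}y'$; combined with the primary decomposition $\langle y\rangle=\langle y_j\rangle\times\langle y'\rangle$, the hypothesis $x\in N(\langle y\rangle)$ forces $(y_j)^{h}\in\langle y_j\rangle$, i.e.\ $h\in N(\langle k\rangle)$, and symmetrically $k\in N(\langle h\rangle)$. As $h,k\in H$ this gives adjacency in $\mathrm{SNorm}(H)$.

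All the individual steps are elementary; the part that needs genuine care — the main obstacle — is verifying that membership in the normaliser of a cyclic subgroup is inherited under the two reductions: passing to a power of $x$, and especially projecting onto the Sylow $p$-component. The latter works precisely because $\langle x\rangle$ and $\langle y\rangle$ split as internal direct products of their $p$- and $p'$-parts, which lets the normalising conditions be tested one Sylow block at a time.
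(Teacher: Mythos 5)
Your proof is correct, but it reaches the conclusion by a genuinely different route from the paper. The paper runs a double minimality argument: it lets $\mathcal{L}$ be the set of elements occurring in a non-commuting, mutually normalising pair, picks $h\in\mathcal{L}$ of least order and then a partner $k$ of least order; for any prime $p$ dividing $|h|$ the pair $(h^p,k)$ is still mutually normalising but has a smaller first entry, so by minimality it must commute, giving $[h,k]^p=1$, and symmetrically $[h,k]^q=1$ for every prime $q$ dividing $|k|$ --- forcing $|h|$ and $|k|$ to be powers of one prime $p$ and making $H=\langle h,k\rangle$ a $p$-group in one stroke. You instead start from an arbitrary bad pair and perform two explicit reductions: powering $x$ to force $|[x,y]|=p$, and then projecting onto the Sylow $p$-factor of the nilpotent group $\langle x,y\rangle$. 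The paper's extremal choice buys brevity and sidesteps the Sylow decomposition entirely; your constructive version makes visible exactly why the $p'$-parts are irrelevant, at the cost of the extra (and genuinely necessary) verification that membership in $N(\langle y\rangle)$ descends to the $p$-components --- which you handle correctly via the primary decomposition $\langle y\rangle=\langle y_j\rangle\times\langle y'\rangle$ and the componentwise computation $y^x=(y_j)^h y'$. All other steps (powers of $x$ staying in the normaliser, $\langle x^{n/p}\rangle$ being characteristic in $\langle x\rangle$, $[x^{n/p},y]=[x,y]^{n/p}$ in class $2$, and $H'=\langle c\rangle\leq Z(H)$) check out, so the argument is complete.
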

\begin{proof}
    Let $G$ be a finite group such that the edge set of its commuting graph is a proper subset of the edge set of its symmetric normaliser graph. This gives us elements $x$ and $y$ that satisfy the \ref{snorm_property} given below. 
    \begin{equation}\label{snorm_property}
        ab \neq ba \mbox{ , } a \in N(\langle b \rangle) \mbox{ and }  b \in N(\langle a \rangle).
    \end{equation}
     Note that if $x, y$ satisfy \ref{snorm_property} then $[x, y]$ commutes with both $x$ and $y$. So using (2.1a) and (2.1b) we get that $ [x^m, y]= {[x, y]}^m = [x, y^m]$ for any integer $m$. 
     
     Let $\mathcal{L}$ denote the collection of all elements $x'$ such that there exists $y' \in G$ with $x'$,$y'$ satisfying \ref{snorm_property}. Choose $h$ from $\mathcal{L}$ such that $|h|$ is the least. Now, define a set $\mathcal{L}_h$ as a collection of all elements $y''$ such that $h$ and $y''$ satisfy \ref{snorm_property}. From this set, we choose $k$ such that $|k|$ is the least. Let $p$ be a prime that divides the order of $h$. Now $h^p$ lies in normaliser of $\langle k \rangle$ in $G$ and $k$ lies in normaliser of $\langle h^p \rangle$ in $G$. This implies that $h^p$ commutes with $k$ and we get $[h,k]^p=e$. Similarly, if $q$ is a prime that divides the order of $k$, then $[h,k]^q=e$ which means that $p$ and $q$ are equal. This gives that the orders of $h$ and $k$ are of prime-power for the same prime, say $p$. Let $H=\langle h,k \rangle$. Then the subgroup $H$ has order $p^n$ for some natural number $n$ and its derived subgroup $H' = \langle [h,k] \rangle$ has order $p$ and $H' \leq Z(H)$, which makes $H$ a $2$-generated nilpotent group of class 2. \\
\end{proof}

Simon R. Blackburn et. al. \cite{BCMV2025} defined a new class of finite groups called exponent-critical $p$-groups of type $\mathcal{B}$. By Theorem D of \cite{BCMV2025}, a non-abelian finite $p$-group has type $\mathcal{B}$ if and only if it is 2-generated with derived subgroup of order $p$. This implies that if $G$ is a finite group such that its commuting graph and symmetric normaliser graph are not equal, then $G$ has a subgroup which is an exponent-critical $p$-group of type $\mathcal{B}$. 
\par
A parameter-based list of isomorphism classes of exponent-critical $p$-groups of type $\mathcal{B}$ was given as Theorem 4.2 of \cite{BCMV2025}, which we reproduce below.
\begin{lemma}\label{ECG_thm}
    Let $P$ be an exponent-critical finite non-abelian $p$-group of type $\mathcal{B}$ of order $p^n$. Then 
    $$P \cong \langle a,b \mid [a,b]^p = [a,b,a] = [a,b,b] = 1, a^{p^{\alpha}} = [a,b]^{p^{\rho}}, b^{p^{\beta}} = [a,b]^{p^{\sigma}} \rangle$$ where $\alpha, \beta, \rho, \sigma$ are integers such that: $ \alpha \geq \beta \geq 1, \alpha + \beta = n-1$ and $ 0 \leq \rho, \sigma \leq 1 $. When $p$ is odd, $P$ is isomorphic to exactly one of the groups whose parameters $(\alpha, \beta, \rho, \sigma)$ are listed below:
    \begin{enumerate}
        \item[A1.]   \begin{enumerate}
                    \item $(\alpha, \beta, 0, 1)$ with $\alpha > \beta \geq 1$. \label{p_1a}
                    \item $(\alpha, \beta, 1, 1)$ with $\alpha > \beta \geq 1$. \label{p_1b}
                    \item $(\alpha, \beta, 1, 0)$ with $\alpha > \beta \geq 1$. \label{p_1c}
                \end{enumerate}
        \item[A2.]  \begin{enumerate}
                    \item $(\alpha, \alpha, 0, 1)$ with $\alpha \geq 1$. \label{p_2a}
                    \item $(\alpha, \alpha, 1, 1)$ with $\alpha \geq 1$. \label{p_2b}
                \end{enumerate}
    \end{enumerate}
    When $p=2$, $P$ is isomorphic to exactly one of the groups whose parameters $(\alpha, \beta, \rho, \sigma)$ are listed below:
    \begin{enumerate}
        \item[B1.]   \begin{enumerate}
                    \item $(\alpha, \beta, 0, 1)$ with $\alpha > \beta \geq 1$. \label{2_1a}
                    \item $(\alpha, \beta, 1, 1)$ with $\alpha > \beta \geq 1$. \label{2_1b}
                    \item $(\alpha, \beta, 1, 0)$ with $\alpha > \beta \geq 1$. \label{2_1c}
                \end{enumerate}
        \item[B2.]   \begin{enumerate}
                    \item $(\alpha, \alpha, 0, 1)$ with $\alpha > 1$. \label{2_2a}
                    \item $(\alpha, \alpha, 1, 1)$ with $\alpha > 1$. \label{2_2b}
                \end{enumerate}
        \item[B3.]   \begin{enumerate}
                    \item $(1,1,0,0)$. \label{2_3a}
                    \item $(1,1,1,1)$. \label{2_3b}
                \end{enumerate}
    \end{enumerate}
\end{lemma}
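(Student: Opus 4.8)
Since Lemma~\ref{ECG_thm} is Theorem~4.2 of \cite{BCMV2025}, one option is simply to quote it; the following is an outline of how a self-contained argument would run. By Theorem~D of \cite{BCMV2025}, the hypothesis that $P$ is non-abelian of type~$\mathcal{B}$ is equivalent to $P = \langle a,b\rangle$ with $|P'| = p$. A normal subgroup of order $p$ in a $p$-group is centralised (the conjugation action factors through $\mathrm{Aut}(\mathbb{Z}/p)$, whose order is prime to $p$), so $P' \leq Z(P)$ and $P$ has class exactly $2$; hence commutators are bilinear, $P' = \langle c\rangle$ with $c = [a,b]$ of order $p$, and the relations $[a,b]^p = [a,b,a] = [a,b,b] = 1$ hold automatically. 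What is left is to (1) extract the two power relations and check that the presented group does not collapse, and (2) partition the admissible parameter tuples into isomorphism classes, treating odd $p$ and $p = 2$ separately.

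For (1): the abelianisation $\overline{P} = P/P'$ is a $2$-generated abelian $p$-group, so $\overline{P} \cong \mathbb{Z}/p^{\alpha}\times\mathbb{Z}/p^{\beta}$ for unique $\alpha \geq \beta$, and it is non-cyclic (else $P$ would be generated by one element together with the central subgroup $P'$, hence abelian), so $\beta \geq 1$; comparing orders gives $n = \alpha+\beta+1$. Choosing $a, b$ to lift independent generators of the two cyclic factors is legitimate since $P' \leq \Phi(P)$, so the Burnside basis theorem yields $\langle a,b\rangle = P$, and bilinearity then forces $\langle c\rangle = P'$. Now $a^{p^{\alpha}}, b^{p^{\beta}} \in P'$; after replacing $a$ and $b$ by suitable powers of themselves and renaming the generator $c$ of $P'$, one may assume $a^{p^{\alpha}} = c^{p^{\rho}}$ and $b^{p^{\beta}} = c^{p^{\sigma}}$ with $\rho,\sigma \in \{0,1\}$, which is the displayed presentation. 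Finally one verifies that the abstract group $\widetilde{P}$ defined by this presentation has order $p^{n}$ --- by a short collection argument, or by exhibiting $\widetilde{P}$ as an explicit central extension of $\mathbb{Z}/p^{\alpha}\times\mathbb{Z}/p^{\beta}$ by $\mathbb{Z}/p$ --- so that the natural surjection from $\widetilde{P}$ onto $P$ is an isomorphism.

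For (2): distinct pairs $(\alpha,\beta)$ give non-isomorphic $\overline{P}$, so fix $(\alpha,\beta)$ and track how $(\rho,\sigma)$ transforms under the generator substitutions $a \mapsto a^{i}b^{j}c^{\varepsilon}$, $b \mapsto a^{k}b^{\ell}c^{\delta}$ whose induced map on $\overline{P}$ is an automorphism. Expanding $a^{p^{\alpha}}$ and $b^{p^{\beta}}$ after such a substitution, via the class-$2$ identity $(gh)^{m} = g^{m}h^{m}[h,g]^{\binom{m}{2}}$, one finds the transformation of $(\rho,\sigma)$ is governed by whether $p$ divides $\binom{p^{\gamma}}{2}$: for odd $p$ it always does, so the only identification is the swap $a\leftrightarrow b$ available when $\alpha = \beta$; this leaves the three classes A1(a)--(c) when $\alpha > \beta$ (with $(0,0)$ reducing to $(0,1)$) and the two classes A2(a),(b) when $\alpha = \beta$. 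For $p = 2$, $\binom{2^{\gamma}}{2}$ is odd exactly when $\gamma = 1$: the odd-$p$ picture persists for $\alpha > \beta$ (list~B1) and for $\alpha = \beta \geq 2$ (list~B2, hence the constraint $\alpha > 1$), but $\alpha = \beta = 1$ becomes exceptional, the substitutions now fusing the tuples $(1,1)$, $(1,0)$, $(0,1)$ (all $\cong D_8$) while $(0,0) \cong Q_8$ stays separate, which yields list~B3. One then checks that the surviving tuples are pairwise non-isomorphic --- using a separating invariant such as the multiset of element orders, equivalently the structure of the power map $x \mapsto x^{p^{\beta}}$ together with $|\Omega_{1}(P)|$, to distinguish groups with a common $(\alpha,\beta)$ --- and that every listed tuple is realised, which is immediate since the presented group is $2$-generated with derived subgroup of order $p$, hence of type~$\mathcal{B}$ by Theorem~D.

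The main obstacle is exactly step (2): systematically enumerating the admissible substitutions, correctly tracking the $\binom{p^{\gamma}}{2}$ terms (the $p = 2$ corrections being the delicate point), and choosing a set of isomorphism invariants that is at once complete and easy to evaluate. This bookkeeping is what occupies \cite{BCMV2025}, to which we refer for the full details.
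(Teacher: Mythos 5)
The paper does not prove this lemma at all: it is stated verbatim as a reproduction of Theorem 4.2 of \cite{BCMV2025} and used as a black box, so the intended justification is exactly the citation you give. Your supplementary sketch (reduction via Theorem D to $2$-generated class-$2$ groups with $|P'|=p$, normalising the power relations so that $\rho,\sigma\in\{0,1\}$, then tracking $(\rho,\sigma)$ under generator substitutions, with the $\binom{p^{\gamma}}{2}$ corrections accounting for the exceptional $p=2$, $\alpha=\beta=1$ case $Q_8$ versus $D_8$) is a faithful outline of the argument carried out in that reference, so nothing further is needed.
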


The authors show that each element of $P$ can be written uniquely as $a^ib^j [a, b]^k$ where $ 0 \leq i < p^{\alpha}, 0 \leq j < p^{\beta}, 0 \leq k <p$. It is also shown that $P' = \langle [a, b] \rangle \leq Z(P)$. This makes $P$ nilpotent of class 2. Also exp($P$), the exponent of $P$, is equal to max($|a|$,$|b|$) except when  $p=2$ and $(\alpha, \beta, \rho, \sigma)=(1,1,1,1)$ and then exp($P$) = $2^2=4$. 

From the list in Lemma \ref{ECG_thm}, we identify the groups that do not have elements $x, y$ which generate $P$ and are also adjacent in the SNorm($P$). This is the main thrust of our next result. Note that in a nilpotent group of class at most 2, 
\begin{equation}\label{id_ab}
    (ab)^i=[b,a]^{i(i-1)/2}a^ib^i
\end{equation} for any positive integer $i$.

\begin{lemma}\label{ecg_case_1b_1c_2b}
    If $P$ is an exponent-critical group of type $\mathcal{B}$ of order $p^n$ with parameters as in Lemma \ref{ECG_thm}, cases A1(b), B1(b); A1(c), B1(c); A2(b), B2(b); B3(b), then there do not exist elements $x,y \in P$ such that $P= \langle x,y \rangle$ and $x,y$ are adjacent in SNorm($P$).
\end{lemma}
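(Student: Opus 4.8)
The plan is to argue by contradiction, exploiting the one feature common to every listed case: there $\rho = 1$, so the relation $a^{p^{\alpha}} = [a,b]^{p^{\rho}}$ collapses to $a^{p^{\alpha}} = 1$, and (by the exponent data recalled just after Lemma \ref{ECG_thm}) $\exp(P) = p^{\alpha}$ — the single exception being case B3(b), which is $D_8$ with exponent $p^{\alpha+1} = 4$. So suppose $x,y \in P$ generate $P$ and are adjacent in $\mathrm{SNorm}(P)$. Since $P$ is non-abelian, $x$ and $y$ cannot commute, so $c := [x,y] \neq 1$; as $P$ has class $2$ with $P' = \langle [a,b] \rangle$ of order $p$, the central element $c$ generates $P'$, i.e. $\langle c \rangle = P'$.

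The first step is to show $P' \leq \langle x \rangle \cap \langle y \rangle$. Indeed, $y \in N(\langle x \rangle)$ gives $yxy^{-1} = x^{i}$ for some $i$, hence $c^{-1} = [y,x] = x^{i-1} \in \langle x \rangle$, so $P' = \langle c \rangle \leq \langle x \rangle$; symmetrically $x \in N(\langle y \rangle)$ forces $P' \leq \langle y \rangle$. Next I would pass to $P/P'$: killing $[a,b]$ in the presentation of Lemma \ref{ECG_thm} shows $P/P'$ is abelian, generated by images of $a,b$ of orders dividing $p^{\alpha}$ and $p^{\beta}$, and an order count gives $P/P' \cong \mathbb{Z}_{p^{\alpha}} \times \mathbb{Z}_{p^{\beta}}$. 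Because $P' \leq \langle x \rangle$ has order $p$, the image $\bar x$ of $x$ has order $|x|/p$, and likewise for $\bar y$; since $\bar x, \bar y$ generate an abelian group of exponent $p^{\alpha}$ in which that exponent is attained, and a finite abelian group generated by two elements of orders $p^{s}, p^{t}$ has exponent $p^{\max(s,t)}$, at least one of $\bar x, \bar y$ has order $p^{\alpha}$. Relabelling if necessary, $|x| = p^{\alpha+1}$.

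At this point the cases A1(b), A1(c), A2(b), B1(b), B1(c), B2(b) are finished: there $\exp(P) = \max(|a|,|b|) = p^{\alpha}$ (for A1(c)/B1(c) one uses $\alpha > \beta$, so $p^{\alpha} \geq p^{\beta+1} = |b|$), hence $P$ has no element of order $p^{\alpha+1}$, a contradiction. The remaining case B3(b) is $P \cong D_8$, where $\exp(P) = p^{\alpha+1} = 4$, so this direct argument breaks; I would finish it by hand using that all elements of order $4$ in $D_8$ lie in its unique cyclic subgroup $C$ of order $4$ and that $P' = Z(D_8) \leq C$. By the previous step $x \in C$; and for $y$, either $|y| = 4$, whence $y \in C$, or $|y| = 2$, forcing $\langle y \rangle = P' \leq Z(P)$ and so $[x,y] = 1$, a contradiction. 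Either way $y \in C$, so $P = \langle x,y \rangle \leq C$, which is absurd.

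The routine ingredients here are the exponent computations (already supplied in the excerpt) and the elementary remark about exponents of two-generated abelian $p$-groups; the only genuinely delicate point — and the step I expect to be the main obstacle — is recognising $D_8$ as the unique group on the list with $\exp(P) = p^{\alpha+1}$ rather than $p^{\alpha}$, so that it must be peeled off and disposed of separately by the "all order-$4$ elements lie in one cyclic subgroup" observation.
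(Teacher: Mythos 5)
Your proof is correct, but it takes a genuinely different route from the paper's. The paper works case by case with the normal form $a^ib^j[a,b]^k$: for A1(b)/B1(b) and A1(c)/B1(c) it computes $xyx^{-1}$ explicitly, compares exponents modulo $p^{\alpha}$, and concludes that any two vertices adjacent in SNorm($P$) actually commute (a slightly stronger fact than the lemma needs); for the $(\alpha,\alpha,1,1)$ cases it uses the order count $|\langle x,y\rangle| = |x||y|/|\langle x\rangle\cap\langle y\rangle| \leq p^{2\alpha} < |P|$. You instead extract the single structural obstruction common to all listed parameters: adjacency of a non-commuting pair forces $P' \leq \langle x\rangle\cap\langle y\rangle$, so passing to $P/P' \cong \mathbb{Z}_{p^{\alpha}}\times\mathbb{Z}_{p^{\beta}}$ and comparing exponents produces an element of order $p^{\alpha+1}$, contradicting $\exp(P)=p^{\alpha}$ (which holds precisely because $\rho=1$ in every listed case). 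This is cleaner and more uniform — it also makes transparent why the complementary parameters $(\alpha,\beta,0,1)$ and $(1,1,0,0)$, where $\exp(P)=p^{\alpha+1}$, are exactly the SNNC-groups of Lemma \ref{ECG_snorm}. A further point in your favour: you correctly flag B3(b) ($D_8$, the unique listed case with $\exp(P)=p^{\alpha+1}$) as needing separate treatment and dispose of it by hand via the unique cyclic subgroup of order $4$; the paper lumps B3(b) in with the $|x||y|/|\langle x\rangle\cap\langle y\rangle|\leq p^{2\alpha}$ bound, which as written relies on $|x|,|y|\leq p^{\alpha}$ and does not literally apply to $D_8$. The price of your approach is that it yields only the non-existence of a generating adjacent pair rather than the stronger ``adjacent implies commuting'' conclusion, but that weaker statement is all that Theorem \ref{Equality_com} uses.
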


\begin{proof}
Consider the parameters in cases A1(b) and B1(b). Then $\sigma = 1$ and $\rho = 1$. Let $x=a^ib^j[a,b]^k$ and $y=a^rb^s[a,b]^t$ be two distinct elements of $P$ such that they are adjacent in SNorm($P$). We shall prove that they commute with each other, so they can not generate the non-abelian group $P$. It follows by induction on $m$ that
    \begin{equation}\label{1b_ym_ind}
        y^m = a^{rm}b^{sm}[a,b]^{tm-rsm(m-1)/2}.
    \end{equation}
The elements $x$ and $y$ are adjacent in SNorm($P$), so there must exist integers $d$ and $e$ such that $$yxy^{-1}=x^d \mbox{ and } xyx^{-1} =y^e.$$ 
    By substituting values of $x$,$y$ and using $b^lab^{-l} = a[b,a]^l$, we get 
    \begin{equation}\label{1b_xyx_inv}
        xyx^{-1} = a^rb^s[b,a]^{rj-si-t}.
    \end{equation}
    Equating \ref{1b_xyx_inv} and \ref{1b_ym_ind} for $m=e$, we get $re \equiv r \mbox{ mod } p^{\alpha}$. If $p$ does not divide $r$, we get $y^e=y$ and so $xyx^{-1}=y$. Similarly, if $p$ does not divide $i$, we have $x^d=x$, and so $yxy^{-1}=x$. Assume that $p$ divides both $r$ and $i$, then $p$ divides $rj-si$ and $[x,y] = [b,a]^{rj-si} = 1$. Hence, $x$ and $y$ generate an abelian subgroup of $P$ whenever they are adjacent in SNorm($P$).
    
    For cases A1(c) and B1(c) we have $\rho = 1$ and $\sigma = 0$. Let $x,y \in P$. Then, there exist unique $i,j,r,s$ satisfying $1 \leq i,r \leq p^{\alpha}$ and $1 \leq j,s \leq p^{\beta + 1}$ such that $x=a^ib^j$ and $y=a^rb^s$. Let $x,y$ be adjacent in SNorm($P$). We shall prove that they commute with each other. Since $x$ and $y$ are adjacent in SNorm($P$), then there must exist integers $d$ and $e$ such that $$yxy^{-1}=x^d \mbox{ and } xyx^{-1} =y^e.$$ 
Proceeding as in the previous case we get $xy=yx$.

Lastly we consider cases A2(c), B2(c) and B3(b). So we have $(\alpha, \beta, \rho, \sigma) = (\alpha, \alpha, 1, 1)$ with $\alpha \geq 1$. If $x$ and $y$ are adjacent in SNorm($P$), then $|\langle x,y \rangle| = \frac{|x||y|}{|\langle x \rangle \cap \langle y \rangle|}  \leq p^{2\alpha} = p^{n-1} < |P|$.  
\end{proof}
This leaves us with the cases when $\rho=0, \sigma=1$ for any prime $p$ and $(\alpha, \beta, \rho, \sigma) = (1, 1, 0, 0)$ when $p=2$. These are the cases A1(a), A2(a), B1(a), B2(a) and B3(a) of Lemma \ref{ECG_thm}. The parameters in these cases are precisely those used to define the SNNC-groups in the introduction.
 
\begin{lemma}\label{ECG_snorm}
    Let $P$ be an exponent-critical non-abelian $p$-group of type $\mathcal{B}$ of order $p^n$. Then the group $P$ has elements $x,y$ which generate $P$ and are adjacent in SNorm($P$) if and only if $P$ is a SNNC-group.
\end{lemma}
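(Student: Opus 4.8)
The plan is to handle the two implications of the biconditional separately. The forward implication I would get essentially for free from the material already developed: if $P$ is an exponent-critical non-abelian $p$-group of type~$\mathcal{B}$ admitting a pair $x,y$ with $P=\langle x,y\rangle$ and $x,y$ adjacent in SNorm($P$), then by Lemma~\ref{ECG_thm} the group $P$ has one of the listed parameter tuples, and it cannot be one of the tuples A1(b), B1(b), A1(c), B1(c), A2(b), B2(b), B3(b), because Lemma~\ref{ecg_case_1b_1c_2b} forbids exactly such a pair there; so its parameters come from A1(a), A2(a), B1(a), B2(a), B3(a), which (as noted just after Lemma~\ref{ecg_case_1b_1c_2b}) are precisely the parameters of the SNNC-groups. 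Hence $P$ is an SNNC-group.

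For the converse I would first record the reformulation that, in a finite group, $x\in N(\langle y\rangle)$ iff $xyx^{-1}\in\langle y\rangle$, and since $xyx^{-1}=[x,y]\,y$ (with $[x,y]=xyx^{-1}y^{-1}$) this is equivalent to $[x,y]\in\langle y\rangle$; symmetrically $y\in N(\langle x\rangle)$ iff $[x,y]\in\langle x\rangle$. Thus $x,y$ are adjacent in SNorm($P$) exactly when $[x,y]\in\langle x\rangle\cap\langle y\rangle$. Since an SNNC-group $P$ has $|P'|=p$, for any generating pair $x,y$ the commutator $[x,y]$ is a nontrivial element of $P'=\langle[a,b]\rangle$; so it is enough to produce generators $x,y$ of $P$ with $[a,b]\in\langle x\rangle$ and $[a,b]\in\langle y\rangle$. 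When $P\cong Q_8$ I would take $x=a$, $y=b$: here $[a,b]$ is the unique involution of $Q_8$, so it lies in every nontrivial subgroup, in particular in $\langle a\rangle$ and in $\langle b\rangle$. For the other SNNC-groups, where $a^{p^{\alpha}}=[a,b]$, $b^{p^{\beta}}=1$, $\alpha\geq\beta\geq1$ and (when $p=2$) $\alpha\geq2$, I would take $x=a$, $y=ab$: clearly $\langle a,ab\rangle=\langle a,b\rangle=P$ and $[a,b]=a^{p^{\alpha}}\in\langle a\rangle$, so only $[a,b]\in\langle ab\rangle$ remains.

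To get $[a,b]\in\langle ab\rangle$ I would compute $(ab)^{p^{\beta}}$ using the class-$2$ power identity~\eqref{id_ab}: with $b^{p^{\beta}}=1$ and $[b,a]=[a,b]^{-1}=a^{-p^{\alpha}}$ this collapses to $(ab)^{p^{\beta}}=a^{p^{\beta}u}$ with $u=1-p^{\alpha}(p^{\beta}-1)/2$, and a short valuation check---done separately for odd $p$ (where $v_p\bigl(p^{\alpha}(p^{\beta}-1)/2\bigr)=\alpha\ge1$) and for $p=2$ (where it equals $\alpha-1\ge1$)---shows $p\nmid u$; hence $\langle(ab)^{p^{\beta}}\rangle=\langle a^{p^{\beta}}\rangle$, which contains $a^{p^{\alpha}}=[a,b]$ because $\beta\le\alpha$. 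The hard part will be precisely this computation: rewriting $(ab)^{p^{\beta}}$ as a power of $a$ and checking that $u$ is a $p$-adic unit uniformly across the SNNC parameter families. The even-prime cases are where one must be careful, since $\binom{p^{\beta}}{2}$ is no longer divisible by $p$, and there one leans on the standing constraints $\alpha\ge\beta$ and the exclusion of $\alpha=\beta=1$ (whose only SNNC member, $Q_8$, was dealt with separately) to keep $u$ coprime to $p$. Beyond that, the argument is just bookkeeping with the normal form $a^ib^j[a,b]^k$ of Lemma~\ref{ECG_thm} or a direct appeal to Lemmas~\ref{ECG_thm} and~\ref{ecg_case_1b_1c_2b}.
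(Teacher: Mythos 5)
Your proposal is correct and follows essentially the same route as the paper: the forward direction is delegated to Lemma~\ref{ecg_case_1b_1c_2b}, $Q_8$ is handled separately, and for the remaining SNNC-groups the same generating pair $a$, $ab$ is used together with identity~\eqref{id_ab} and the observation that $\alpha\ge 2$ when $p=2$. The only (cosmetic) difference is that you certify $a\in N(\langle ab\rangle)$ by showing $[a,b]\in\langle (ab)^{p^{\beta}}\rangle$, whereas the paper directly exhibits $a(ab)a^{-1}=(ab)^{1+p^{\alpha}}$; both verifications rest on the same divisibility check.
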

\begin{proof}
We will prove this result by showing the existence of $x$ and $y$ which satisfy the conditions in each of the SNNC-groups.
    
 Note that in the case B3(a), when $(\alpha, \beta, \rho, \sigma) = (1, 1, 0, 0)$, we have that $P$ is isomorphic to the quaternion group $Q_8$. The symmetric normaliser graph of $Q_8$ is a complete graph. So $Q_8$ has 2 elements that generate it and are also adjacent in SNorm($Q_8$).
    
For the parameters that don't give $Q_8$, the proof is as follows. We have $ [a,b] = a^{p^\alpha}$ which gives us $ba^{-1}b^{-1}=a^{p^{\alpha}-1}$. Thus, 
    \begin{equation}\label{p_1a_eq1}
      a(ab)a^{-1} = a^2(ba^{-1}b^{-1})b = a^2(a^{-1+p^{\alpha}})b=a^{1+p^{\alpha}}b.
    \end{equation}
Since $\langle a \rangle$ is normal in $P$, we have $ab \in N(\langle a \rangle)$. We claim that $a \in N(\langle ab \rangle)$. Substituting $i=1+p^{\alpha}$ in \ref{id_ab}, we get
    \begin{equation}\label{p_1a_eq2}
        \begin{split}
            (ab)^i &= [b,a]^{(1+p^{\alpha})(p^{\alpha}/2)}a^{1+p^{\alpha}}b^{1+p^{\alpha}}\\
            &= a^{1+p^{\alpha}}b^{1+p^{\alpha}} \text{ (since } (1+p^{\alpha})p^{\alpha}/2 \equiv 0  \text{ mod } p \text{)}\\ 
            &= a^{1+p^{\alpha}}b
        \end{split}
    \end{equation}
    Equating Equations \ref{p_1a_eq1} and \ref{p_1a_eq2}, we have $$a(ab)a^{-1} = a^{1+p^{\alpha}}b = (ab)^{1+p^{\alpha}} \in \langle ab \rangle.$$Therefore, we get $x$ and $y$ such that they generate $P$ and are adjacent in SNorm($P$).
\end{proof}

Theorem \ref{thm_2} follows from Lemmas \ref{com_eq_snorm_lemma}, \ref{ECG_thm}, \ref{ecg_case_1b_1c_2b} and \ref{ECG_snorm} as shown below.

\begin{thm}\label{Equality_com} 
    Let $G$ be a finite group. The commuting graph of $G$ is equal to the symmetric normaliser graph of $G$ if and only if $G$ does not have a subgroup isomorphic to SNNC groups.
\end{thm}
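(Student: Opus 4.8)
The plan is to derive Theorem \ref{Equality_com} by assembling the four preceding lemmas, using one elementary observation throughout: normalisers are monotone under subgroup inclusion, so if $H \le G$ and $u,v \in H$ with $u \in N_H(\langle v \rangle)$ and $v \in N_H(\langle u \rangle)$, then the same containments hold in $G$; hence adjacency in SNorm($H$) implies adjacency in SNorm($G$), whereas $uv = vu$ is an absolute relation independent of ambient group. This is precisely the bookkeeping that lets us transport a counterexample upward from a subgroup in one direction, and locate an SNNC-subgroup in the other.

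For the implication ``$G$ has no subgroup isomorphic to an SNNC-group $\Rightarrow$ Com($G$) $=$ SNorm($G$)'', I would argue the contrapositive. Suppose Com($G$) $\neq$ SNorm($G$); since $E(\mathrm{Com}(G)) \subseteq E(\mathrm{SNorm}(G))$ always by Theorem \ref{Hierarchy}(i), there is an edge $\{x,y\}$ of SNorm($G$) with $xy \neq yx$. By Lemma \ref{com_eq_snorm_lemma} this forces a subgroup $H = \langle h,k \rangle$ which is a $2$-generated $p$-group with $|H'| = p$ and $H' \le Z(H)$, and with $h,k$ adjacent in SNorm($H$); note $H$ is non-abelian (as $|H'|=p$), so its generators $h,k$ do not commute. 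By Theorem D of \cite{BCMV2025}, recorded in the text, $H$ is an exponent-critical $p$-group of type $\mathcal{B}$, hence is classified by Lemma \ref{ECG_thm}. Now Lemma \ref{ecg_case_1b_1c_2b} eliminates the parameter families A1(b), B1(b), A1(c), B1(c), A2(b), B2(b), B3(b), since in each of those $H$ could have no generating pair adjacent in SNorm($H$) --- yet $h,k$ is such a pair. The only surviving possibilities are A1(a), A2(a), B1(a), B2(a), B3(a), which are exactly the SNNC parameters; equivalently, one simply invokes Lemma \ref{ECG_snorm} directly, since $H$ admits a generating pair adjacent in SNorm($H$), so $H$ is an SNNC-group. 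Either way $G$ has a subgroup isomorphic to an SNNC-group, the desired contradiction.

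For the converse, ``$G$ has a subgroup isomorphic to an SNNC-group $\Rightarrow$ Com($G$) $\neq$ SNorm($G$)'', let $H \le G$ with $H \cong P$ for some SNNC-group $P$. The ``if'' half of Lemma \ref{ECG_snorm} (whose proof exhibits such elements explicitly for $Q_8$ and for each group with $\rho = 0$, $\sigma = 1$) provides $x,y \in H$ with $\langle x,y\rangle = H$ that are adjacent in SNorm($H$); since $H$ is non-abelian, $xy \neq yx$. By the monotonicity remark, $\{x,y\}$ is an edge of SNorm($G$) that is not an edge of Com($G$), so Com($G$) $\neq$ SNorm($G$).

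I do not anticipate a genuine obstacle here: the substantive content lives in Lemmas \ref{com_eq_snorm_lemma}--\ref{ECG_snorm}. The two points that need care are (a) checking that the subgroup $H$ produced by Lemma \ref{com_eq_snorm_lemma} meets exactly the hypotheses of Theorem D of \cite{BCMV2025} ($2$-generated, non-abelian, derived subgroup of order $p$), and (b) the asymmetry between SNorm-adjacency, which is inherited upward from subgroups, and commuting, which is not --- this asymmetry is what makes both directions of the equivalence go through.
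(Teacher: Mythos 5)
Your proposal is correct and follows essentially the same route as the paper: apply Lemma \ref{com_eq_snorm_lemma} together with Theorem D of \cite{BCMV2025} and the classification in Lemma \ref{ECG_thm}, then use Lemmas \ref{ecg_case_1b_1c_2b} and \ref{ECG_snorm} to pin down the SNNC parameters, and reverse the argument via the explicit generating pair from Lemma \ref{ECG_snorm}. Your explicit remarks on the upward inheritance of SNorm-adjacency and on why the generating pair cannot commute are points the paper leaves implicit, but they do not change the argument.
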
 
\begin{proof} 
    Let $G$ be a finite group such that the two graphs of $G$ are unequal. By Lemma \ref{com_eq_snorm_lemma}, the group $G$ has a subgroup $H = \langle h, k \rangle$ which is a 2-generated $p$-group such that $|H'| =p$ and $H' = Z(H)$. Further $h, k$ are adjacent in SNorm$H$. By Theorem D of \cite{BCMV2025}, the subgroup $H$ is an exponent-critical group of type $\mathcal{B}$. By Lemma \ref{ECG_thm}, the subgroup $H$ is isomorphic to one of the groups whose parameters are mentioned in the list. Since the subgroup $H$ has elements $h$ and $k$ which generate it and are adjacent in its symmetric normaliser graph of $H$, so by Lemma \ref{ecg_case_1b_1c_2b} and Lemma \ref{ECG_snorm}, the subgroup $H$ must be an SNNC-group. \\
     Conversely, if a finite group $G$ has a subgroup isomorphic to an SNNC-group, then $G$ will have elements that satisfy \ref{snorm_property}, which means that the commuting graph of $G$ is not equal to the symmetric graph of $G$. 
\end{proof}

\section{Hierarchies and Equalities II}
Before proving Theorem \ref{thm_3}, we remark that the symmetric normaliser graph is complete if and only if the group is a Dedekind group. This is true because every subgroup of $G$ is normal in $G$ if and only if $\langle x \rangle$ is normal in $G$ for each $x \in G$. This also follows from Theorem 4.1 in \cite{DGM2025} for a directed normalizing graph of a group. The vertex set of this graph are the elements of the group, and there is a directed edge from a vertex $x$ to a vertex $y$ if the subgroup $\langle x \rangle$ is normal in the subgroup $\langle x,y \rangle$. If $x$ and $y$ are adjacent in SNorm($G$), then there is a directed edge from $x$ to $y$ and $y$ to $x$ in the directed normalizing graph of $G$.

\begin{thm}
    The symmetric normaliser graph and the nilpotent graph of a finite group $G$ are equal if and only if for an odd prime $p$, the Sylow $p$-subgroups are abelian, and for $p=2$ the Sylow $p$-subgroups are either abelian or a direct product of $Q_8$ and an elementary abelian $2$-group.
\end{thm}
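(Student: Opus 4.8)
The plan is to reduce everything to two facts and a single structural observation. By Theorem~\ref{Hierarchy}(i) we already have $E(\mathrm{SNorm}(G)) \subseteq E(\mathrm{Nilp}(G))$, and both graphs have vertex set $G$, so the two graphs are equal precisely when every edge of $\mathrm{Nilp}(G)$ is an edge of $\mathrm{SNorm}(G)$. The tools I would invoke are: (a) the observation recorded at the start of this section, that for a finite group $H$ the graph $\mathrm{SNorm}(H)$ is complete if and only if $H$ is a Dedekind group; and (b) the classical classification of finite Dedekind (Hamiltonian) groups, namely that such a group is abelian or isomorphic to $Q_8 \times B \times A$ with $B$ elementary abelian of exponent $2$ and $A$ abelian of odd order. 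Specialising (b) to a $p$-group: a Dedekind $p$-group is abelian when $p$ is odd, and abelian or of the form $Q_8 \times B$ with $B$ elementary abelian when $p=2$ --- which is exactly the Sylow condition in the statement. The degenerate cases ($x$ or $y$ trivial, or a group with a single Sylow subgroup) are immediate and I would dispose of them at the outset.

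For the forward implication, suppose $\mathrm{SNorm}(G) = \mathrm{Nilp}(G)$ and fix a Sylow $p$-subgroup $P$ of $G$. For any $x,y \in P$ the subgroup $\langle x,y\rangle$ is a $p$-group, hence nilpotent, so $x$ and $y$ are adjacent in $\mathrm{Nilp}(G)$ and therefore in $\mathrm{SNorm}(G)$; and for two elements of $P$, being adjacent in $\mathrm{SNorm}(G)$ is the same as being adjacent in $\mathrm{SNorm}(P)$, since normalising a cyclic subgroup is intrinsic to that subgroup. Thus $\mathrm{SNorm}(P)$ is complete, so by (a) $P$ is a Dedekind group, and by (b) it has precisely the asserted form. (Equivalently, completeness of $\mathrm{SNorm}(P)$ says every cyclic --- hence every --- subgroup of $P$ is normal in $P$.)

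For the converse, assume every Sylow subgroup of $G$ has the stated form, and let $x,y$ be adjacent in $\mathrm{Nilp}(G)$, i.e. $N := \langle x,y\rangle$ is nilpotent; I must show $x \in N(\langle y\rangle)$ and $y \in N(\langle x\rangle)$. Since $N$ is finite nilpotent, it is the internal direct product $N = P_1 \times \cdots \times P_r$ of its Sylow subgroups. Each $P_i$ is a $p_i$-subgroup of $G$, hence lies inside a Sylow $p_i$-subgroup of $G$, which is Dedekind by hypothesis; as a subgroup of a Dedekind group is Dedekind (if $K \le H \le S$ and $K \trianglelefteq S$ then $K \trianglelefteq H$), each $P_i$ is Dedekind. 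Now a direct product of Dedekind groups of pairwise coprime orders is again Dedekind: for $H \le N$ one checks, using the coprimality of the $|P_i|$, that $H = \prod_i (H \cap P_i)$, and each $H \cap P_i \trianglelefteq P_i$, so conjugation by any element of $N$ (which acts componentwise) fixes $H$; hence $H \trianglelefteq N$. Thus $N$ is Dedekind, so in particular $\langle x\rangle, \langle y\rangle \trianglelefteq N$, giving $x \in N(\langle y\rangle)$ and $y \in N(\langle x\rangle)$, i.e. $x$ and $y$ are adjacent in $\mathrm{SNorm}(G)$. Hence $\mathrm{Nilp}(G) = \mathrm{SNorm}(G)$.

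I do not anticipate a genuine obstacle: the only content beyond the two cited facts is the elementary decomposition $H = \prod_i (H\cap P_i)$ for subgroups of a product of coprime-order groups, and the trivial bookkeeping between normalisers computed in $G$ and in subgroups. The point requiring the most care is purely expository --- to match the Sylow hypothesis of the theorem exactly with the class of Dedekind $p$-groups, handling the $p=2$ exception ($Q_8$ times an elementary abelian $2$-group) correctly, and to check the degenerate cases.
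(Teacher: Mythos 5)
Your proposal is correct and follows essentially the same route as the paper: both directions reduce to the observation that $\mathrm{SNorm}$ of a group is complete exactly when the group is Dedekind, combined with the classification of finite Dedekind groups and the fact that a finite nilpotent group is the direct product of its Sylow subgroups. The only (harmless) difference is that in the converse you prove directly that a direct product of Dedekind groups of coprime orders is Dedekind, where the paper simply re-invokes Robinson's Theorem 5.3.7 to conclude that $\langle x,y\rangle$ is Dedekind.
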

\begin{proof}
    Let $G$ be a finite group such that the symmetric normaliser graph and the nilpotent graph of $G$ are equal. Let $P$ denote a Sylow $p$-subgroup of $G$. Since the nilpotent graph of $P$ is complete, the symmetric normaliser graph of $P$ is also complete, which makes subgroup $P$ a Dedekind group. By Theorem 5.3.7 of \cite{DJSR1996}, a group is Dedekind if and only if it is either abelian or the direct product of a quaternion group of order 8, an elementary abelian 2-group and an abelian group with all its elements of odd order. This gives us that for odd primes $p$, the Sylow $p$-subgroups are abelian, and the Sylow $2$-subgroups of $G$ are either abelian or a direct product of $Q_8$ and an elementary abelian $2$-group.
    \par
    Conversely, for the finite group $G$ and for an odd prime $p$, let the Sylow $p$-subgroups be abelian and for $p=2$ let the Sylow $2$-subgroups either be abelian or a direct product of $Q_8$ and an elementary abelian $2$-group. Let $x$ and $y$ be adjacent in Nilp($G$). Since $H=\langle x,y \rangle$ is a subgroup of $G$, for an odd prime $p$, the Sylow $p$-subgroups of $H$ are abelian, and for $p=2$, the Sylow $2$-subgroups of $H$ are either abelian or a direct product of $Q_8$ and an elementary abelian $2$-group. Since $H$ is nilpotent, it is a direct product of its Sylow subgroups. By Theorem 5.3.7 of \cite{DJSR1996}, the subgroup $H$ is a Dedekind group and its symmetric normaliser graph is complete. Thus $x$ and $y$ are adjacent in SNorm($G$).
\end{proof}
The proof of Theorem \ref{thm_3} shows that for a finite group $G$, its symmetric normaliser graph and nilpotent graph are the same if and only if the Sylow subgroups of $G$ are Dedekind groups. 

There are necessary and sufficient conditions mentioned in \cite{PJC2022} for a finite group to have equal power graph and enhanced power graph or equal enhanced power graph and commuting graph. Using these conditions, we have the following result.
\begin{thm}\label{equality_others}
    Let $G$ be a finite group. The enhanced power graph and the symmetric normaliser graph of $G$ are equal if and only if the Sylow $p$-subgroups of $G$ are cyclic.
\end{thm}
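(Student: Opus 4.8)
The plan is to exploit the fact that the symmetric normaliser graph is sandwiched between the commuting graph and the nilpotent graph, together with the two equality results already in hand. Since the enhanced power graph sits below the commuting graph in Cameron's hierarchy, we always have $E(\mathrm{EPow}(G)) \subseteq E(\mathrm{Com}(G))$, and by Theorem \ref{thm_1}(i) we have $E(\mathrm{Com}(G)) \subseteq E(\mathrm{SNorm}(G)) \subseteq E(\mathrm{Nilp}(G))$. Hence an assumption that $\mathrm{EPow}(G) = \mathrm{SNorm}(G)$ immediately forces the chain of equalities $\mathrm{EPow}(G) = \mathrm{Com}(G) = \mathrm{SNorm}(G)$, so the task reduces to combining the known characterisation of when $\mathrm{EPow}(G) = \mathrm{Com}(G)$ with Theorem \ref{thm_2}.

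For the ``if'' direction, assume every Sylow subgroup of $G$ is cyclic. It suffices to show $E(\mathrm{SNorm}(G)) \subseteq E(\mathrm{EPow}(G))$, the reverse inclusion being automatic. If $x,y$ are adjacent in $\mathrm{SNorm}(G)$, then by Theorem \ref{thm_1}(i) the subgroup $H = \langle x,y\rangle$ is nilpotent, hence the internal direct product of its Sylow subgroups; each such Sylow subgroup is contained in a Sylow subgroup of $G$ and is therefore cyclic, and a direct product of cyclic groups of pairwise coprime order is cyclic. Thus $H$ is cyclic, so $x$ and $y$ are adjacent in $\mathrm{EPow}(G)$.

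For the ``only if'' direction, assume $\mathrm{EPow}(G) = \mathrm{SNorm}(G)$, so in particular $\mathrm{EPow}(G) = \mathrm{Com}(G)$. By the characterisation of this equality (which says precisely that every abelian subgroup of $G$ is cyclic, equivalently that every Sylow subgroup of $G$ is cyclic or a generalised quaternion $2$-group), every non-cyclic Sylow subgroup of $G$ would have to be generalised quaternion. But a generalised quaternion $2$-group contains a subgroup isomorphic to $Q_8$, and $Q_8$ is an SNNC-group, so Theorem \ref{thm_2} would give $\mathrm{Com}(G) \neq \mathrm{SNorm}(G)$, contradicting the sandwich above. Hence every Sylow subgroup of $G$ is cyclic.

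The only nontrivial input is the characterisation of finite groups with $\mathrm{EPow}(G) = \mathrm{Com}(G)$ as exactly those whose Sylow subgroups are cyclic or generalised quaternion; I would cite this from \cite{PJC2022} (or re-derive it using the classical fact that a finite $p$-group in which every abelian subgroup is cyclic has a unique subgroup of order $p$ and is therefore cyclic or generalised quaternion). Alternatively, this step can be bypassed entirely on the ``only if'' side by a direct argument: if some Sylow $p$-subgroup $P$ of $G$ is non-cyclic, then either $P$ contains a non-cyclic abelian subgroup---any two commuting generators of which are adjacent in $\mathrm{SNorm}(G)$ but not in $\mathrm{EPow}(G)$---or every abelian subgroup of $P$ is cyclic, forcing $P$ to be generalised quaternion, in which case the standard generators $i,j$ of a subgroup $Q_8 \leq P$ satisfy $\langle i\rangle, \langle j\rangle \trianglelefteq Q_8$ (index $2$), so $i,j$ are adjacent in $\mathrm{SNorm}(G)$ but generate the non-cyclic group $Q_8$ and hence are not adjacent in $\mathrm{EPow}(G)$; either way $\mathrm{EPow}(G) \neq \mathrm{SNorm}(G)$.
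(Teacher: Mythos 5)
Your proposal is correct, and its ``only if'' direction is essentially the paper's: reduce $\mathrm{EPow}(G)=\mathrm{SNorm}(G)$ to the chain $\mathrm{EPow}(G)=\mathrm{Com}(G)=\mathrm{SNorm}(G)$ via the hierarchy, invoke Proposition 3.2 of \cite{PJC2022} to get cyclic or generalised quaternion Sylow subgroups, and rule out the quaternion case using Theorem \ref{thm_2} because $Q_8$ is an SNNC-group (you usefully make explicit that a generalised quaternion group \emph{contains} $Q_8$, a point the paper glosses over, and your alternative self-contained argument via $\mathbb{Z}_p\times\mathbb{Z}_p$ versus generalised quaternion is a nice way to avoid citing Proposition 3.2 altogether). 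Where you genuinely diverge is the ``if'' direction: the paper again routes through the two equality theorems --- cyclic Sylow subgroups give no SNNC-subgroup, hence $\mathrm{Com}(G)=\mathrm{SNorm}(G)$ by Theorem \ref{Equality_com}, and $\mathrm{EPow}(G)=\mathrm{Com}(G)$ by Proposition 3.2 --- whereas you argue directly that if $x,y$ are adjacent in $\mathrm{SNorm}(G)$ then $\langle x,y\rangle$ is nilpotent by Theorem \ref{thm_1}(i), hence the direct product of its Sylow subgroups, each cyclic since it embeds in a cyclic Sylow subgroup of $G$, so $\langle x,y\rangle$ is cyclic and $x,y$ are adjacent in $\mathrm{EPow}(G)$. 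Your version is more elementary and self-contained (it proves the stronger containment $E(\mathrm{SNorm}(G))\subseteq E(\mathrm{EPow}(G))$ without external citations), while the paper's buys brevity by reusing machinery it has already set up. Both are sound.
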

\begin{proof}
    Let $G$ be a finite group such that its enhanced power graph and symmetric normaliser graphs are the same. This implies that EPow($G$) = Com($G$) = SNorm($G$). By Proposition 3.2 of \cite{PJC2022}, Sylow $p$-subgroups of $G$ are cyclic or generalised quaternion groups. Since the quaternion group is an SNNC group, by Theorem \ref{Equality_com}, we must have Sylow $p$-subgroups of $G$ are cyclic. Conversely, if the Sylow $p$-subgroups of $G$ are cyclic then $G$ does not have a subgroup which is an SNNC group, and so, Com($G$) = SNorm($G$) and by Proposition 3.2 of \cite{PJC2022}, EPow($G$) = Com($G$). 
 \end{proof}

We present some concluding remarks related to other equalities in the hierarchy of graphs defined on groups. Let $G$ be a finite group.
\begin{enumerate}[(a)]
    \item From Proposition 3.2 of \cite{PJC2022} and Theorem \ref{equality_others} we get that Pow($G$) = SNorm($G$) if and only if the Sylow $p$-subgroups of $G$ are cyclic and elements of $G$ whose orders are coprime do not commute. Such a group $G$ can only have elements of prime power order, namely $G$ is an EPPO group which were first studied by G. Higman in \cite{GH1957}. To restate, we have Pow($G$) = SNorm($G$) if and only if $G$ is an EPPO group with cyclic Sylow subgroups. So $G$ is solvable and has at most two distinct primes dividing its order (see Lemma 6.3, \cite{PJCNVM2022}). The paper \cite{PJCNVM2022} also lists all EPPO groups in Theorem 1.7. From this we get that Pow($G$) = SNorm($G$) if and only if $G$ is a cyclic $p$-group or $|G|=p^{\alpha}q^{\beta}$ where $p, q$ are distinct primes and $G$ is a Frobenius or $2$-Frobenius group with cyclic Sylow subgroups.
    \item The deep commuting graph (DCom($G$)) of a finite group G was introduced in  \cite{PJCBK2023}. Two elements of $G$ are adjacent in DCom($G$) if and only if their preimages in every central extension of $G$ commute. Since $E$(DCom($G$)) $\subseteq$ $E$(Com($G$)), from Theorem \ref{thm_2} and Proposition 3.3 (b) of \cite{PJC2022} we can get a necessary and sufficient condition for DCom($G$) and SNorm($G$) to be equal.  
    \item Let $G$ not be $2$-generated then NGen($G$) is complete. So SNorm($G$) = NGen($G$) if and only if SNorm($G$) is complete which is if and only if $G$ is a Dedekind group.
    \end{enumerate}
We end with some open questions.\\ \par
{\bf Question 1} What are the finite groups $G$ of order $p^{\alpha}q^{\beta}$ for distinct primes $p,q$ that are Frobenius or $2$-Frobenius and have cyclic Sylow subgroups? If such a $G$ is Frobenius it will be a semidirect product of a Frobenius kernel $K$ isomorphic to ${\mathbb{Z}_{p^{\alpha}}}$ and a Frobenius complement $H$ isomorphic to ${\mathbb{Z}_{q^{\beta}}}$. 
\\ \par
{\bf Question 2}. Let $G$ be a non-abelian simple group. When would SNorm($G$)= NGen($G$)? or is it always the case that SNorm($G$) $\subsetneq$ NGen(G)? Proposition 3.1(c) of \cite{PJC2022} shows that for any group $H$ we have NGen($H$) = Com($H$) if and only if $H$ is abelian and not $2$-generated or a minimal non-abelian group. Since a minimal non-abelian group is always solvable, we see that for a finite non-abelian simple group $G$, we always have $E$(Com($G$)) $\subsetneq$ $E$(NGen($G$)). 
Also any SNNC-group $P$ for an odd prime $p$ is always contained in an $A_n$ for all sufficiently large $n$. So we certainly get that $E$(Com($G$)) $\subsetneq$ $E$(SNorm($G$)) when $G = A_n$ for all $n$ suitably large.\\ \par 
{\bf Question 3}. What are the necessary and sufficient conditions for SNorm$G$ to be equal to Engel($G$)?  If $G$ is a Dedekind group, then SNorm($G$) = Nilp($G$) = Engel($G$), as all are complete graphs. On the other hand, we know that if the Sylow subgroups of $G$ are Dedekind then SNorm($G$) = Nilp($G$). \\ \par
{\bf Question 4} Let $G$ be a non-Dedekind finite group whose Sylow subgroups are Dedekind. Is $E$(Nilp($G$)) $\subsetneq$ $E$(Engel($G$))? Note that for $G = S_3$, we have $E$(Pow($G$)) = $E$(EPow($G$)) = $E$(Com($G$)) = $E$(SNorm($G$)) = $E$(Nilp($G$)) $\subsetneq$ $E$(Engel($G$)).

\end{document}